\numberwithin{equation}{section}
\newtheorem*{theorem*}{Main Theorem}
\theoremstyle{plain}
\newtheorem{theorem}{Theorem}[section]
\newtheorem{definition}[theorem]{Definition}
\newtheorem{corollary}[theorem]{Corollary}
\theoremstyle{definition}
\newtheorem{remark}[theorem]{Remark}
\begin{document}

\newcommand{\eq}{equation}
\newcommand{\real}{\ensuremath{\mathbb R}}
\newcommand{\comp}{\ensuremath{\mathbb C}}
\newcommand{\rn}{\ensuremath{{\mathbb R}^n}}
\newcommand{\tn}{\ensuremath{{\mathbb T}^n}}
\newcommand{\rnp}{\ensuremath{\real^n_+}}
\newcommand{\rnn}{\ensuremath{\real^n_-}}
\newcommand{\Rn}{\ensuremath{{\mathbb R}^{n-1}}}
\newcommand{\Zn}{\ensuremath{{\mathbb Z}^{n-1}}}
\newcommand{\no}{\ensuremath{\nat_0}}
\newcommand{\ganz}{\ensuremath{\mathbb Z}}
\newcommand{\zn}{\ensuremath{{\mathbb Z}^n}}
\newcommand{\zom}{\ensuremath{{\mathbb Z}_{\Om}}}
\newcommand{\zOm}{\ensuremath{{\mathbb Z}^{\Om}}}
\newcommand{\As}{\ensuremath{A^s_{p,q}}}
\newcommand{\Bs}{\ensuremath{B^s_{p,q}}}
\newcommand{\Fs}{\ensuremath{F^s_{p,q}}}
\newcommand{\Fsr}{\ensuremath{F^{s,\rloc}_{p,q}}}
\newcommand{\nat}{\ensuremath{\mathbb N}}
\newcommand{\Om}{\ensuremath{\Omega}}
\newcommand{\di}{\ensuremath{{\mathrm d}}}
\newcommand{\sn}{\ensuremath{{\mathbb S}^{n-1}}}
\newcommand{\Ac}{\ensuremath{\mathcal A}}
\newcommand{\Acs}{\ensuremath{\Ac^s_{p,q}}}
\newcommand{\Bc}{\ensuremath{\mathcal B}}
\newcommand{\Cc}{\ensuremath{\mathcal C}}
\newcommand{\cc}{{\scriptsize $\Cc$}${}^s (\rn)$}
\newcommand{\ccd}{{\scriptsize $\Cc$}${}^s (\rn, \delta)$}
\newcommand{\Fc}{\ensuremath{\mathcal F}}
\newcommand{\Lc}{\ensuremath{\mathcal L}}
\newcommand{\Mc}{\ensuremath{\mathcal M}}
\newcommand{\Ec}{\ensuremath{\mathcal E}}
\newcommand{\Pc}{\ensuremath{\mathcal P}}
\newcommand{\Efr}{\ensuremath{\mathfrak E}}
\newcommand{\Mfr}{\ensuremath{\mathfrak M}}
\newcommand{\Mbf}{\ensuremath{\mathbf M}}
\newcommand{\Dbb}{\ensuremath{\mathbb D}}
\newcommand{\Lbb}{\ensuremath{\mathbb L}}
\newcommand{\Pbb}{\ensuremath{\mathbb P}}
\newcommand{\Qbb}{\ensuremath{\mathbb Q}}
\newcommand{\Rbb}{\ensuremath{\mathbb R}}
\newcommand{\vp}{\ensuremath{\varphi}}
\newcommand{\hra}{\ensuremath{\hookrightarrow}}
\newcommand{\supp}{\ensuremath{\mathrm{supp \,}}}
\newcommand{\ssupp}{\ensuremath{\mathrm{sing \ supp\,}}}
\newcommand{\dist}{\ensuremath{\mathrm{dist \,}}}
\newcommand{\unif}{\ensuremath{\mathrm{unif}}}
\newcommand{\ve}{\ensuremath{\varepsilon}}
\newcommand{\vk}{\ensuremath{\varkappa}}
\newcommand{\vr}{\ensuremath{\varrho}}
\newcommand{\pa}{\ensuremath{\partial}}
\newcommand{\oa}{\ensuremath{\overline{a}}}
\newcommand{\ob}{\ensuremath{\overline{b}}}
\newcommand{\of}{\ensuremath{\overline{f}}}
\newcommand{\LA}{\ensuremath{L^r\!\As}}
\newcommand{\LcA}{\ensuremath{\Lc^{r}\!A^s_{p,q}}}
\newcommand{\LcdA}{\ensuremath{\Lc^{r}\!A^{s+d}_{p,q}}}
\newcommand{\LcB}{\ensuremath{\Lc^{r}\!B^s_{p,q}}}
\newcommand{\LcF}{\ensuremath{\Lc^{r}\!F^s_{p,q}}}
\newcommand{\Lf}{\ensuremath{L^r\!f^s_{p,q}}}
\newcommand{\La}{\ensuremath{\Lambda}}
\newcommand{\Lob}{\ensuremath{L^r \ob{}^s_{p,q}}}
\newcommand{\Lof}{\ensuremath{L^r \of{}^s_{p,q}}}
\newcommand{\Loa}{\ensuremath{L^r\, \oa{}^s_{p,q}}}
\newcommand{\Lcoa}{\ensuremath{\Lc^{r}\oa{}^s_{p,q}}}
\newcommand{\Lcob}{\ensuremath{\Lc^{r}\ob{}^s_{p,q}}}
\newcommand{\Lcof}{\ensuremath{\Lc^{r}\of{}^s_{p,q}}}
\newcommand{\Lca}{\ensuremath{\Lc^{r}\!a^s_{p,q}}}
\newcommand{\Lcb}{\ensuremath{\Lc^{r}\!b^s_{p,q}}}
\newcommand{\Lcf}{\ensuremath{\Lc^{r}\!f^s_{p,q}}}
\newcommand{\id}{\ensuremath{\mathrm{id}}}
\newcommand{\tr}{\ensuremath{\mathrm{tr\,}}}
\newcommand{\trd}{\ensuremath{\mathrm{tr}_d}}
\newcommand{\trL}{\ensuremath{\mathrm{tr}_L}}
\newcommand{\ext}{\ensuremath{\mathrm{ext}}}
\newcommand{\re}{\ensuremath{\mathrm{re\,}}}
\newcommand{\rank}{\ensuremath{\mathrm{rank\,}}}
\newcommand{\Rea}{\ensuremath{\mathrm{Re\,}}}
\newcommand{\Ima}{\ensuremath{\mathrm{Im\,}}}
\newcommand{\loc}{\ensuremath{\mathrm{loc}}}
\newcommand{\rloc}{\ensuremath{\mathrm{rloc}}}
\newcommand{\osc}{\ensuremath{\mathrm{osc}}}
\newcommand{\pr}{\pageref}
\newcommand{\wh}{\ensuremath{\widehat}}
\newcommand{\wt}{\ensuremath{\widetilde}}
\newcommand{\ol}{\ensuremath{\overline}}
\newcommand{\os}{\ensuremath{\overset}}
\newcommand{\Li}{\ensuremath{\overset{\circ}{L}}}
\newcommand{\Ai}{\ensuremath{\os{\, \circ}{A}}}
\newcommand{\Ci}{\ensuremath{\os{\circ}{\Cc}}}
\newcommand{\dom}{\ensuremath{\mathrm{dom \,}}}
\newcommand{\SA}{\ensuremath{S^r_{p,q} A}}
\newcommand{\SB}{\ensuremath{S^r_{p,q} B}}
\newcommand{\SF}{\ensuremath{S^r_{p,q} F}}
\newcommand{\Hc}{\ensuremath{\mathcal H}}
\newcommand{\Nc}{\ensuremath{\mathcal N}}
\newcommand{\Lci}{\ensuremath{\overset{\circ}{\Lc}}}
\newcommand{\bmo}{\ensuremath{\mathrm{bmo}}}
\newcommand{\BMO}{\ensuremath{\mathrm{BMO}}}
\newcommand{\cm}{\\[0.1cm]}
\newcommand{\Aa}{\ensuremath{\os{\, \ast}{A}}}
\newcommand{\Ba}{\ensuremath{\os{\, \ast}{B}}}
\newcommand{\Fa}{\ensuremath{\os{\, \ast}{F}}}
\newcommand{\Aas}{\ensuremath{\Aa{}^s_{p,q}}}
\newcommand{\Bas}{\ensuremath{\Ba{}^s_{p,q}}}
\newcommand{\Fas}{\ensuremath{\Fa{}^s_{p,q}}}
\newcommand{\Ca}{\ensuremath{\os{\, \ast}{{\mathcal C}}}}
\newcommand{\Cas}{\ensuremath{\Ca{}^s}}
\newcommand{\Car}{\ensuremath{\Ca{}^r}}
\newcommand{\bl}{$\blacksquare$}

\begin{center}
{\Large Spectral theory for\\fractal pseudodifferential operators}
\\[1cm]
{Hans Triebel}
\\[0.2cm]
Institut f\"{u}r Mathematik\\
Friedrich--Schiller--Universit\"{a}t Jena\\
07737 Jena, Germany
\\[0.1cm]
email: hans.triebel@uni-jena.de
\end{center}

\begin{center}
{\em In memory of Albrecht Pietsch} ({\em 1934--2024})
\end{center} 

\begin{abstract}
\noindent The paper deals with the distribution of eigenvalues of the compact fractal pseudodifferential operator $T^\mu_\tau$,
\[
\big( T^\mu_\tau f\big)(x) = \int_{\rn} e^{-ix\xi} \, \tau(x,\xi) \, \big( f\mu  \big)^\vee (\xi) \, \di \xi, \qquad x\in \rn,
\]
in suitable special Besov spaces $B^s_p (\rn) = B^s_{p,p} (\rn)$, $s>0$, $1<p<\infty$. Here $\tau(x,\xi)$ are the symbols of (smooth)
pseudodifferential operators belonging to appropriate H\"{o}rmander classes $\Psi^\sigma_{1, \vr} (\rn)$, $\sigma <0$,   $0 \le \vr 
\le 1$ (including the exotic case $\vr =1$) whereas $\mu$ is the Hausdorff measure  of a compact $d$--set $\Gamma$ in \rn, $0<d<n$. 
This extends  previous assertions for the positive-definite selfadjoint fractal differential operator $(\id - \Delta)^{\sigma/2} \mu$
based on Hilbert space arguments in the context of suitable Sobolev spaces $H^s (\rn) = B^s_2 (\rn)$. We collect the outcome in the 
{\bfseries Main Theorem} below. Proofs are based on estimates for the entropy numbers  of the compact trace operator
\[
\tr_\mu: \quad B^s_p (\rn) \hra L_p (\Gamma, \mu), \quad s>0, \quad 1<p<\infty.
\]
We add at the end of the paper a few personal reminiscences illuminating the role of Pietsch in connection with the creation of 
approximation numbers and entropy numbers.
\end{abstract}

{\bfseries Keywords:} Pseudodifferential operators, fractals, Besov spaces, \\ \indent eigenvalues, entropy numbers 

{\bfseries 2020 MSC:} 46E35, 41A46, 28A80, 35P15

\section[Spaces, pseudodiff. op.]{Spaces and smooth pseudodifferential operators}  \label{S1}
\subsection[Notation, spaces]{Basic notation and spaces}   \label{S1.1}
We fix some notation. Let $\nat$ be the collection of all natural numbers and $\no = \nat \cup \{0 \}$. Let $\rn$ be Euclidean 
$n$--space, where $n \in \nat$. Put $\real = \real^1$ whereas $\comp$ is the complex plane. As usual $\ganz$ is the collection of all
integers, and $\zn$, where $n\in \nat$, denotes the lattice of all points $m = (m_1, \ldots, m_n) \in \rn$ with $m_j \in \ganz$, $j=
1, \ldots,n$. Let $\nat^n_0$, where $n\in \nat$, be the set of all multi--indices, $\alpha = (\alpha_1, \ldots, \alpha_n)$ with
$\alpha_j \in \no$ and $|\alpha| = \sum^n_{j=1} \alpha_j$. As usual, derivatives are abbreviated by
\begin{\eq}    \label{1.1}
D^\alpha = \frac{\pa^{|\alpha|}}{\pa x^{\alpha_1}_1 \ldots \pa x^{\alpha_n}_n}, \qquad \alpha \in \nat^n_0, \quad x \in \rn,
\end{\eq}
and
\begin{\eq}   \label{1.2}
\xi^\alpha = \xi^{\alpha_1}_1 \cdots \xi^{\alpha_n}_n, \qquad \alpha \in \nat^n_0, \quad \xi \in \rn.
\end{\eq}
For $x\in \rn$, $y\in \rn$, let
\begin{\eq}   \label{1.3}
xy = \sum^n_{j=1} x_j y_j, \qquad x= (x_1, \ldots, x_n), \quad y = (y_1, \ldots, y_n),
\end{\eq}
and $|x| = \sqrt{xx}$. If $a\in \real$ then $a_+ = \max (a,0)$. 

Let $M$ be a Lebesgue measurable set in $\rn$ where $|M|$ is the related Lebesgue measure, $0 \le |M| \le \infty$. Then $L_p (M)$, $0<p <\infty$,
is the usual quasi--Banach space of all complex--valued Lebesgue--measurable functions in $M$ such that
\begin{\eq}   \label{1.4}
\| f \, | L_p (M)\| = \Big( \int_M |f(x)|^p \, \di x \Big)^{1/p} <\infty,
\end{\eq}
complemented by
\begin{\eq}   \label{1.5}
\|f \, |L_\infty(M) \| = \inf \big\{ N: \, |\{x\in M: \, |f(x)| >N \} | =0 \big\}.
\end{\eq}

Let $\big(S(\rn), S'(\rn) \big)$ be the usual dual pairing, where $S(\rn)$ is the Schwartz space of all complex--valued rapidly 
decreasing $C^\infty$ functions
in $\rn$ and $S'(\rn)$ is its topological dual, the space of tempered distributions on \rn. Let $\vp \in S(\rn).$ Then
\begin{\eq}   \label{1.6}
\wh{\vp} (\xi) = \big(F \vp \big)(\xi) = (2\pi)^{-n/2} \int_{\rn}  e^{-ix \xi} \, \vp (x) \, \di x, \qquad \xi \in \rn,
\end{\eq}
is the Fourier transform of \vp, and
\begin{\eq}   \label{1.7}
\vp^\vee (\xi) = \big( F^{-1} \vp \big)(\xi) =(2 \pi)^{-n/2} \int_{\rn} e^{i x \xi} \vp (x) \, \di x,\qquad \xi \in \rn,
\end{\eq}
the related inverse Fourier transform. Both $F$ and $F^{-1}$ are extended to $S'(\rn)$ in the standard way. We assume that the reader
is familiar with basic properties of the Fourier transform, subject of many textbooks, including \cite{HT08} and the forthcoming book
\cite{HST}.

Next we recall briefly the Fourier--analytical definition of some function spaces in \rn. Let $\vp_0 \in S(\rn)$, 
\begin{\eq}   \label{1.8}
\vp_0 (x) =1 \ \text{if $|x| \le 1$} \qquad \text{and} \qquad \vp_0 (x) =0 \ \text{if $|x| \ge 3/2$},
\end{\eq}
and let
\begin{\eq}   \label{1.9}
\vp_k (x) = \vp_0 \big( 2^{-k} x) - \vp_0 \big( 2^{-k+1} x \big), \qquad x \in \rn, \quad k \in \nat.
\end{\eq}
Since
\begin{\eq}    \label{1.10}
\sum^\infty_{j=0} \vp_j (x) = 1 \qquad \text{for} \quad x\in \rn,
\end{\eq}
the $\vp_j$ form a dyadic resolution of unity. The entire analytic functions $(\vp_j \wh{f} )^\vee (x)$ make sense pointwise in $\rn$ for any $f \in S' (\rn)$. 

\begin{definition}   \label{D1.1}
Let $\vp = \{ \vp_j \}^\infty_{j=0}$ be the above dyadic resolution  of unity. Let
\begin{\eq}   \label{1.11}
0<p,q \le \infty  \qquad  \text{and} \quad s \in \real.
\end{\eq}
Then $\Bs (\rn)$ is the collection of all $f \in S' (\rn)$ such that
\begin{\eq}   \label{1.12}
\| f \, | \Bs (\rn) \|_{\vp} = \Big( \sum^\infty_{j=0} 2^{jsq} \big\| (\vp_j \wh{f})^\vee \, | L_p (\rn) \big\|^q \Big)^{1/q} 
\end{\eq}
is finite $($with the usual modification if $q= \infty)$. 
\end{definition}

\begin{remark}   \label{R1.2}
These are the well--known Besov spaces. They are quasi--Banach spaces which are independent of the above resolution of unity $\vp$
according to \eqref{1.8}--\eqref{1.10} (equivalent quasi--norms). The theory of these spaces and also of their counterparts $\Fs (\rn)$
with \eqref{1.11} (including $F^s_{\infty,q} (\rn)$) may be found in \cite{T20}. For further details one may consult \cite{T83, T92, T06} and the (historical) references within. In what follows we rely exclusively on the special Besov spaces
\begin{\eq}   \label{1.13}
B^s_p (\rn) = B^s_{p,p} (\rn) \qquad \text{where} \quad 1<p<\infty, \quad s\in \real,
\end{\eq}
and the (fractional) Sobolev spaces
\begin{\eq}   \label{1.14}
H^s (\rn) = H^s_2 (\rn) = B^s_2 (\rn), \qquad s\in \real.
\end{\eq}
We will use that $H^s (\rn)$ are Hilbert spaces whereas $B^s_p (\rn)$ are reflexive Banach spaces which are isomorphic to $\ell_p$
(related references may be found below when these properties are needed).
\end{remark}

\subsection[Pseudodiff, op.]{Pseudodifferential operators}    \label{S1.2}
Our second main ingredient are pseudodifferential operators. We collect some basic definitions and related assertions. We rely on some
mapping properties of the pseudodifferential operator $T_\tau$,
\begin{\eq}   \label{1.15}
(T_\tau f)(x) = \int_{\rn} e^{ix \xi} \tau (x,\xi) \wh{f} (\xi) \, \di \xi, \qquad x\in \rn,
\end{\eq}
of the H\"{o}rmander class $\Psi^\sigma_{1,\delta} (\rn)$ with $n\in \nat$, $\sigma \in \real$ and $0 \le \delta
\le 1$ where the $C^\infty$ symbol $\tau (x,\xi)$ satisfies for some constants $c_{\alpha, \gamma} \ge 0$,
\begin{\eq}   \label{1.16}
\big| D^\alpha_x D^\gamma_\xi \tau (x, \xi) \big| \le c_{\alpha, \gamma} (1+ |\xi|)^{\sigma - |\gamma| + \delta |\alpha|}, \qquad
x\in \rn, \quad \xi \in \rn,
\end{\eq}
$\alpha \in \nat^n_0$, $\gamma \in \nat^n_0$. These classes of
operators attracted a lot of attention since the 1970s (and even earlier). The standard references are \cite{Tay81}, \cite{Hor85}
and the related parts of \cite{Ste93}. There one finds basic assertions and many applications. 
Mapping properties of the operators $T_\tau \in \Psi^\sigma_{1,\delta} (\rn)$ with $0 \le \delta <1$ (excluding the exotic class)
in the spaces
$\As (\rn)$, $A \in \{B,F\}$, according to Definition \ref{D1.1} and its counterparts $\Fs (\rn)$ with $p<\infty$
have been studied in the 1980s and 1990s based on the technicalities
available at that time.  We refer the reader to\cite[Theorem 6.2.2, pp.\,258--261]{T92} based on \cite{Pai83, Tri87, Tor91}. It has 
been shown in \cite{Run85} that one can incorporate the exotic class $\delta =1$ without further assumptions if the smoothness $s$ in
the target space $\As (\rn)$, $A\in \{B,F \}$, is sufficiently large. This applies in particular to the spaces $\Bs (\rn)$ as introduced
in Definition \ref{D1.1} if $1\le p \le \infty$ and $s>0$. We returned recently in \cite{Tri22} to this topic based on wavelets in
full generality. There one finds further references and additional explanations. This will not be repeated here. We fix a special case which covers what will needed below. As usual nowadays, $T: A \hra B$ means that $T$ is a linear and continuous mapping from the
quasi--Banach space $A$ into the quasi--Banach space $B$. 

\begin{theorem}   \label{T1.3}
Let $1 \le p,q \le \infty$ and $s>0$. Let $T_\tau \in \Psi^0_{1, \delta} (\rn)$ with $0 \le \delta \le 1.$Then
\begin{\eq}   \label{1.17}
T_\tau: \quad \Bs (\rn) \hra \Bs (\rn).
\end{\eq}
\end{theorem}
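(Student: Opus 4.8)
The plan is to reduce everything to an almost--diagonal estimate between Littlewood--Paley blocks and then to activate the hypothesis $s>0$ through a summation argument. Write $f_k = (\vp_k \wh{f})^\vee$ for the frequency--localised pieces of $f$, so that $f = \sum_{k\ge 0} f_k$ and, by Definition \ref{D1.1}, $\|f \, | \Bs (\rn)\| \sim \big\|\big(2^{ks}\|f_k \,|L_p(\rn)\|\big)_k \, | \ell_q\big\|$. Since $\wh{f_k}$ is supported in the annulus $|\xi| \sim 2^k$, the operator $T_\tau$ acting on $f_k$ only feels the symbol $\tau(x,\xi)$ for $\xi$ in that annulus. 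Decomposing $T_\tau f = \sum_k T_\tau f_k$, the whole matter comes down to controlling, for each output block $l$ and each input block $k$, the quantity $\big\| (\vp_l \widehat{T_\tau f_k})^\vee \, | L_p (\rn)\big\|$. First I would establish the off--diagonal bound
\begin{\eq}   \label{P1}
\big\| (\vp_l \widehat{T_\tau f_k})^\vee \, | L_p (\rn)\big\| \le c\, \omega(l-k)\, \| f_k \, | L_p (\rn)\|, \qquad \omega(m) = \min\big(1, 2^{-Nm}\big),
\end{\eq}
with $N$ as large as we please and $c$ independent of $k,l,f$.

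Second, I would derive \eqref{P1} from kernel estimates. With $P_j g = (\vp_j \wh{g})^\vee$, the composition $P_l T_\tau P_k$ has an integral kernel obtained by inserting the smooth frequency cut--offs into \eqref{1.15}; repeated integration by parts in $\xi$, using the symbol inequalities \eqref{1.16} with $\sigma =0$ (so that $|D^\gamma_\xi \tau| \le c_\gamma (1+|\xi|)^{-|\gamma|}$ while the $x$--derivatives cost only $|D^\alpha_x \tau| \le c_\alpha (1+|\xi|)^{\delta |\alpha|}$), shows that the frequency content of $T_\tau f_k$ is essentially confined to $|\eta| \lesssim 2^{\max(k, \delta k)} = 2^k$, with rapidly decaying tails beyond $2^{k}$. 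This yields the rapid decay $\omega(l-k) = 2^{-N(l-k)}$ for $l>k$, whereas for $l \le k$ one simply uses the uniform $L_p$--boundedness of $P_l$ together with $\|T_\tau f_k \, |L_p(\rn)\| \le c\, \|f_k \,|L_p(\rn)\|$; the latter itself follows from the Schur--type bound $\sup_x \int_{\rn} |K_k(x,z)|\,\di z \le c$ for the kernel $K_k$ of the scale--$2^{-k}$ localised operator $g\mapsto T_\tau (\vp_k\wh{g})^\vee$.

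Third, I would assemble the pieces. Putting $a_k = 2^{ks}\|f_k\,|L_p(\rn)\|$ and $b_l = 2^{ls}\big\| (\vp_l \widehat{T_\tau f})^\vee\,|L_p(\rn)\big\|$, the triangle inequality and \eqref{P1} give
\begin{\eq}   \label{P2}
b_l \le c \sum_{k\ge 0} 2^{(l-k)s}\, \omega(l-k)\, a_k = c \sum_{k\ge 0} \kappa(l-k)\, a_k, \qquad \kappa(m) = 2^{ms}\omega(m).
\end{\eq}
For $m \le 0$ one has $\kappa(m) = 2^{ms}$, which is summable precisely because $s>0$; for $m>0$ one has $\kappa(m) = 2^{(s-N)m}$, summable once $N>s$. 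Hence $\kappa \in \ell_1$, and Young's inequality for convolutions of sequences yields $\|(b_l) \,|\ell_q\| \le \|\kappa \,|\ell_1\|\cdot \|(a_k)\,|\ell_q\|$, which is \eqref{1.17}. The decisive role of $s>0$ is exactly the convergence of $\sum_{m\le 0} 2^{ms}$, and this is what rescues the exotic case $\delta =1$: there the input block $k$ may spread its image all the way down to the low frequencies $l\le k$ with no gain from the symbol, so only the geometric weight $2^{(l-k)s}$ controls the sum.

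The main obstacle is the kernel estimate behind \eqref{P1} in the borderline case $\delta =1$. When $\delta <1$ the $x$--derivatives cost $2^{\delta k |\alpha|} \ll 2^{k|\alpha|}$, so $T_\tau f_k$ is genuinely localised near frequency $2^k$ and \eqref{P1} is almost diagonal; this is the classical situation covered by \cite[Theorem 6.2.2]{T92}. For $\delta =1$ the $x$--spreading $2^{\delta k}=2^k$ matches the frequency $2^k$ itself, so integration by parts in $\xi$ no longer separates the output frequencies below $2^k$, and one must track carefully that the upward tail $l>k$ still decays rapidly while accepting that the downward range $l \le k$ is merely bounded. Making this quantitative --- and checking that the constants in \eqref{P1} stay uniform in $k,l$ --- is the technical heart; once it is in place, the summation in \eqref{P2} is routine. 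A cleaner alternative would be to pass to a wavelet basis of $\Bs (\rn)$ and to prove that the matrix of $T_\tau$ is almost diagonal on the associated sequence space, in the spirit of \cite{Tri22}.
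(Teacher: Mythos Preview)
The paper does not actually prove Theorem~\ref{T1.3}; it records the result and defers entirely to the literature (Remark~\ref{R1.4}, citing \cite{Run85}, \cite[Theorem 6.2.2]{T92}, and \cite{Tri22}). So there is no in--paper proof to compare against. Your sketch is, in outline, precisely the argument behind those references, most directly Runst's treatment of the exotic class: one shows that $P_l T_\tau P_k$ is $L_p$--bounded uniformly in $k,l$, with rapid decay $2^{-N(l-k)}$ only in the upward direction $l>k$, and then the Besov norm is recovered by a weighted $\ell_1 * \ell_q$ convolution in which the geometric factor $2^{(l-k)s}$ with $s>0$ absorbs the downward tail $l\le k$. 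That diagnosis of why $s>0$ is needed for $\delta=1$ is exactly right.

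Two minor points worth tightening. First, in justifying the uniform bound $\|T_\tau f_k\,|L_p\| \le c\,\|f_k\,|L_p\|$ you invoke only the row condition $\sup_x \int |K_k(x,z)|\,\di z \le c$; for the full range $1\le p\le\infty$ Schur's test requires the column condition $\sup_z \int |K_k(x,z)|\,\di x \le c$ as well. Both follow from the same pointwise kernel estimate $|K_k(x,z)| \le C_M\, 2^{nk}(1+2^k|x-z|)^{-M}$ obtained by integration by parts in $\xi$, since that bound depends only on $|x-z|$. Second, the passage from the blockwise estimate \eqref{P1} to the sum \eqref{P2} uses that $T_\tau f = \sum_k T_\tau f_k$ converges in a sense compatible with applying $P_l$; this is routine (convergence in $S'(\rn)$ suffices, since each $P_l$ is continuous there), but should be mentioned. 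The wavelet alternative you flag at the end is exactly the route the paper itself points to via \cite{Tri22}.
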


\begin{remark}    \label{R1.4}
This assertion is covered by the above literature. It remains valid for any $s\in \real$ if one excludes the exotic case $\delta =1$. 
The step from the zero class $\Psi^0_{1,\delta} (\rn)$ to $\Psi^\sigma_{1,\delta} (\rn)$ with $\sigma \in \real$ will be done below by 
lifting.
\end{remark}

\section[traces, fractal diff. op.]{Compact traces and fractal differential operators}   \label{S2}
\subsection[traces]{Compact traces}    \label{S2.1}
A further ingredient for the spectral assertions of fractal pseudodifferential  operators below are compact mappings between 
function spaces measured in terms of approximation numbers  and entropy numbers. First we recall the related definitions on an abstract
level. All Banach spaces and quasi--Banach spaces in this paper are complex.

\begin{definition}   \label{D2.1}
Let $T: A \hra B$ be a linear and continuous mapping from the Banach space $A$ into the Banach space $B$. 
\cm
{\em (i)} Then the approximation number $a_k  (T)$ of $T$,
\begin{\eq}   \label{2.1}
a_k (T) = \inf \{ \|T-L \|: \ \rank L <k \}, \qquad k\in \nat,
\end{\eq}
is the infimum over all linear mappings $L: A \hra B$ of $\rank L <k$, where $\rank L$ is the dimension of the range of $L$.
\cm
{\em (ii)} Then the entropy number $e_k (T)$, $k\in \nat$, is the infimum of all $\ve >0$ such that
\begin{\eq}   \label{2.2}
T (U_A) \subset \bigcup^{2^{k-1}}_{j=1} (b_j + \ve U_B ) \quad \text{for some} \quad b_1, \ldots b_{2^{k-1}} \in B,
\end{\eq}
where $U_A = \{ a\in A: \|a \,| A \| \le 1 \}$ and $U_B = \{ b\in B: \,\|b\,| B \| \le 1 \}$.
\end{definition}

\begin{remark}   \label{R2.2}
Details and references  may be found in \cite[Definition 1.87, Remark 1.88, Section 1.10, pp.\,55--58]{T06} and \cite[Section 1.3.1,
pp.~7--13]{ET96}. We assume that the reader is familiar  with these notation. Some related personal comments are given in Section \ref{S4} at the end of this paper. We collect very few properties which will be needed below. Obviously, $a_1 (T) = \|T \|.$ If $T: A \hra B$
is compact then the dual operator $T'$ is also compact and
\begin{\eq}    \label{2.3}
a_k (T') = a_k (T), \qquad T': B' \hra A', \quad k\in \nat.
\end{\eq}
A proof may  be found in \cite[Proposition 2.5, p.~55]{EdE87}. Duality assertions for entropy numbers are not known in general (as far as we know). But a special case plays a crucial role below. Then specific references will be given.
Furthermore the linear and continuous mapping $T: A \hra B$ is compact if, and only if, $e_k (T) \to 0$ for $k \to \infty$. Let $R,S,T$
be linear and continuous mappings. Then one has
\begin{\eq}   \label{2.4}
a_k (R \circ S \circ T) \le \|R \|\cdot a_k (S) \cdot \|T \|, \qquad k \in \nat,
\end{\eq}
\begin{\eq}   \label{2.4a}
a_{k+l-1} (S \circ T) \le a_k (S) \cdot a_l (T), \qquad k\in \nat, \quad l \in \nat.
\end{\eq}
and
\begin{\eq}   \label{2.5}
e_k (R \circ S \circ T) \le \|R \| \cdot e_k (S)  \cdot \|T \|, \qquad k \in \nat,
\end{\eq}
\begin{\eq}   \label{2.5a}
e_{k+l-1} (S \circ T) \le e_k (S) \cdot e_l (T), \qquad k\in \nat, \quad l \in \nat.
\end{\eq}
for related compositions.
\end{remark}

We assume that the reader is also familiar with the basic assertions of fractal geometry, especially fractal sets in \rn, inclusively so called $d$--sets, $0<d<n$, and related Hausdorff measures. All what one needs may be found in \cite[Sections 1.12 and 1.17]{T06}.
First we repeat some assertions obtained in \cite{T06} adapted to our further needs. This means in particular that we do not deal with
most general compactly supported isotropic Radon measures in $\rn$ as there, but focus on the distinguished sub--class of compact
$d$--sets. 

Let $\Gamma$ be a compact $d$--set in \rn, $0<d<n$, furnished with the Hausdorff measure $\mu = {\mathcal H}^d|\Gamma$. Let $L_p (
\Gamma, \mu)$ with $1<p<\infty$ be the usual complex Banach space normed by
\begin{\eq}   \label{2.6}
\|f\, | L_p (\Gamma,\mu)\| = \Big( \int_{\rn} |f(x)|^p \, \mu(\di x) \Big)^{1/p} = \Big( \int_{\Gamma} |f(\gamma)|^p \, \mu( \di \gamma
)\Big)^{1/p}.
\end{\eq}
As detailed in \cite[Sections 1.12.2, 1.17.2]{T06} one can interpret $L_p (\Gamma, \mu)$ as a subset of $S'(\rn)$ identifying $f\in
L_p (\Gamma, \mu)$ with the complex finite measure $f \mu \in S'(\rn)$,
\begin{\eq}   \label{2.7}
\big( \id_\mu f)(\vp) = \int_{\Gamma} f(\gamma) \, \vp (\gamma) \, \mu (\di \gamma), \qquad \vp \in S(\rn).
\end{\eq}
In addition to properties of these so--called {\em identification operators} $\id_\mu$ we are interested in traces on $\Gamma$. Let
$s>0$, $1<p<\infty$, and
\begin{\eq}   \label{2.8}
\Big( \int_{\Gamma} |\vp(\gamma)|^p \, \mu (\di \gamma) \Big)^{1/p} \le c \, \| \vp \, | B^s_p (\rn) \|
\end{\eq}
for some $c>0$ and all $\vp \in S(\rn)$. Here $B^s_p (\rn)$ are the special Besov spaces according to \eqref{1.13}.
Then the completion of the pointwise trace $\big(\tr_{\mu} \vp \big)(\gamma) =\vp (\gamma)$, 
$\gamma \in \Gamma$, in $B^s_p (\rn)$ is called the trace and
\begin{\eq}   \label{2.9}
\tr_\mu: \quad B^s_p (\rn) \hra L_p (\Gamma, \mu)
\end{\eq}
is the corresponding linear and bounded {\em trace operator}. A detailed discussion in the context of more general measures and spaces
may be found in \cite[Section 1.17.2 and Chapter 7]{T06}. We collect some properties of the operators $\tr_\mu$ and $\id_\mu$ specifying 
corresponding assertions in \cite{T06} in the context of related approximation numbers and entropy numbers as introduced in Definition
\ref{D2.1}. Let $\{b_i: i\in I \}$ and $\{ d_i: i\in I \}$, be two sets of positive numbers. Then $b_i \sim d_i$ (equivalence) means that there are two positive numbers $c_1$ and $c_2$ such that $c_1 b_i \le d_i \le c_2 b_i$ for all $i\in I$
where $I$ is an arbitrary index set.

\begin{theorem}  \label{T2.3}
Let $\Gamma$ be a compact $d$--set in \rn, $0<d<n$, and let $\mu$ be the corresponding Hausdorff measure.  Let
\begin{\eq}   \label{2.10}
1<p<\infty, \quad \text{and} \quad \frac{n-d}{p} <s \le \frac{n}{p}.
\end{\eq}
{\em (i)} Then $\tr_\mu$,
\begin{\eq}   \label{2.11}
\tr_\mu: \quad B^s_p (\rn) \hra L_p (\Gamma, \mu)
\end{\eq}
is compact and
 \begin{\eq}   \label{2.12}
e_k (\tr_\mu ) \sim a_k (\tr_\mu) \sim k^{- \frac{1}{p} + \frac{1}{d} (\frac{n}{p} -s )}, \qquad k\in \nat.
\end{\eq}
{\em (ii)} Then $\id_\mu$ is dual to $\tr_\mu$,
\begin{\eq}    \label{2.13}
\id_\mu = \tr_\mu': \quad L_{p'} (\Gamma, \mu) \hra B^{-s}_{p'} (\rn), \qquad \frac{1}{p} + \frac{1}{p'} = 1.
\end{\eq}
\end{theorem}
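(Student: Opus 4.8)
The plan is to prove both parts by pushing everything down to the sequence--space level. For part (i) I would first invoke a wavelet (equivalently, atomic) decomposition of $B^s_p(\rn)$, which by Remark \ref{R1.2} identifies this space isomorphically with a weighted $\ell_p$: writing the coefficients as $\lambda_{j,m}$ at dyadic scale $j$ and position $m\in 2^{-j}\zn$, one has $\|f \, | B^s_p(\rn)\| \sim \big( \sum_{j,m} 2^{j(s-n/p)p} |\lambda_{j,m}|^p \big)^{1/p}$ for $L_\infty$--normalized building blocks. The operator $\tr_\mu$ then acts by restricting each building block to $\Gamma$ and measuring it in $L_p(\Gamma,\mu)$, and this is where the $d$--set hypothesis enters decisively: a ball of radius $2^{-j}$ meeting $\Gamma$ has $\mu$--measure $\sim 2^{-jd}$, so the restricted block has $L_p(\Gamma,\mu)$--norm $\sim 2^{-jd/p}$, and at scale $j$ there are $\sim 2^{jd}$ such balls. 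Consequently $\tr_\mu$ is equivalent to a diagonal operator on $\ell_p$ carrying, at level $j$, roughly $2^{jd}$ entries of common size
\[
\sigma_j = \frac{2^{-jd/p}}{2^{j(s-n/p)}} = 2^{-j\kappa}, \qquad \kappa = s - \frac{n-d}{p} > 0,
\]
the positivity of $\kappa$ being guaranteed by the left inequality in \eqref{2.10}.

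Second, I would read off the asymptotics of the ordered singular numbers of this diagonal model. Since the number of entries of size at least $\sigma_j$ is $\sum_{i\le j} 2^{id} \sim 2^{jd}$, setting $k\sim 2^{jd}$ makes the $k$--th largest entry $\sim 2^{-j\kappa} \sim k^{-\kappa/d}$, and $-\kappa/d = -\frac1p + \frac1d(\frac np - s)$ is exactly the exponent in \eqref{2.12}. For a diagonal operator on $\ell_p$ the approximation numbers equal the decreasingly ordered entries, which yields the upper bound for $a_k(\tr_\mu)$ immediately; the matching bound for $e_k(\tr_\mu)$ follows from the classical estimates for entropy numbers of diagonal and of finite--dimensional $\ell_p^m \hra \ell_p^m$ operators recorded in \cite{ET96}. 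The upper constraint $s\le n/p$ in \eqref{2.10} is what keeps us in the regime where a single power law governs the decay.

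The main obstacle is the matching lower bound. Here I would exploit the lower Ahlfors regularity of the $d$--set to place, at each fixed scale $j$, a genuinely $\sim 2^{jd}$--dimensional family of disjointly supported bumps centred on $\Gamma$; the linear span of these bumps realizes, up to constants, a weighted finite--dimensional embedding $\sigma_j\,\id\colon \ell_p^{M}\hra \ell_p^{M}$ with $M\sim 2^{jd}$. For such embeddings the standard lower bounds (trivial for $a_k$ with $k\le M$, and volume/Bernstein--type for $e_k$) force $a_k(\tr_\mu)$ and $e_k(\tr_\mu)$ from below by a constant multiple of $\sigma_j$ whenever $k\sim 2^{jd}$; since both numbers are monotone in $k$ and the scales $2^{jd}$ are lacunary, this yields the lower bound $k^{-\kappa/d}$ for all $k\in\nat$. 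Verifying that the disjoint bumps are simultaneously admissible in $B^s_p(\rn)$ and essentially $\ell_p$--orthogonal in $L_p(\Gamma,\mu)$ is the delicate point, and it is precisely here that both the upper and the lower mass bounds $\mu(B(\gamma,r))\sim r^d$ are indispensable; the full argument is carried out in \cite[Chapter 7]{T06}.

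For part (ii) I would argue by duality. Since $1<p<\infty$, the space $B^s_p(\rn)$ is reflexive with dual $B^{-s}_{p'}(\rn)$, while $L_p(\Gamma,\mu)' = L_{p'}(\Gamma,\mu)$, so the Banach--space adjoint of \eqref{2.11} is a continuous map $\tr_\mu'\colon L_{p'}(\Gamma,\mu)\hra B^{-s}_{p'}(\rn)$. It remains to identify it with $\id_\mu$: for $g\in L_{p'}(\Gamma,\mu)$ and $\vp\in S(\rn)$ one computes, using $\tr_\mu\vp = \vp|_\Gamma$ together with \eqref{2.7},
\[
\langle \tr_\mu' g, \vp\rangle = \langle g, \tr_\mu \vp\rangle = \int_\Gamma g(\gamma)\,\vp(\gamma)\,\mu(\di\gamma) = (\id_\mu g)(\vp),
\]
and the density of $S(\rn)$ in $B^s_p(\rn)$ shows that the two operators coincide. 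Finally, the compactness of $\tr_\mu$ from part (i) passes to $\id_\mu = \tr_\mu'$, and \eqref{2.3} gives $a_k(\id_\mu) = a_k(\tr_\mu)$, so no separate estimates are needed for the identification operator.
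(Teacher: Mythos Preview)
Your proposal is correct. The paper's own proof is deliberately minimal: it simply cites \cite[Example 7.25]{T06} for part (i) and \cite[(7.37)]{T06} together with the dualities \eqref{2.14} for part (ii), treating the theorem as a direct specialization of known results. What you have written is essentially an outline of the argument that lies \emph{behind} those citations --- the wavelet isomorphism $B^s_p(\rn)\cong\ell_p$, the reduction of $\tr_\mu$ to a diagonal operator whose level--$j$ block has $\sim 2^{jd}$ entries of common size $2^{-j(s-(n-d)/p)}$, and the matching lower bound via disjoint bumps on $\Gamma$ --- and you acknowledge this yourself by pointing to \cite[Chapter 7]{T06} for the full details. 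So the route is the same; you have just unpacked it. Your duality computation for part (ii) is exactly the standard identification and is what the cited passage in \cite{T06} amounts to. The closing remark about $a_k(\id_\mu)=a_k(\tr_\mu)$ via \eqref{2.3} is correct but goes slightly beyond what Theorem \ref{T2.3}(ii) actually asserts; in the paper this observation is deferred to Corollary \ref{C3.1}.
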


\begin{proof}
Part (i) is covered by \cite[Example 7.25, p.\,313]{T06} (as the specification of a more general assertion). The duality \eqref{2.13}
is a special case of \cite[(7.37), p.\,304]{T06} based on
\begin{\eq}   \label{2.14}
L_p (\Gamma, \mu)' = L_{p'} (\Gamma, \mu) \quad \text{and} \quad B^s_p (\rn)' = B^{-s}_{p'} (\rn)
\end{\eq}
where the latter is covered by \cite[Theorem 2.11.2, p.\,178]{T83}. One may also consult \cite[Section 9.2, pp.\,122--125]{T01}.
\end{proof}

\subsection[Fractal diff. op.]{Fractal differential operators}   \label{S2.2}
Let  
\begin{\eq}   \label{2.15}
w_\alpha (x) = (1 + |x|^2)^{\alpha/2}, \qquad \alpha \in \real, \quad x\in \rn,
\end{\eq}
and
\begin{\eq}   \label{2.16}
I_\alpha: \quad f \mapsto (w_\alpha \wh{f} )^\vee = (w_\alpha f^\vee )^\wedge, \qquad f\in S'(\rn), \quad \alpha \in \real.
\end{\eq}
Quite obviously, $I_\alpha$ maps $S(\rn)$ one--to--one onto itself and also $S'(\rn)$ one--to--one onto itself.
Furthermore, $I_\alpha$,
\begin{\eq}   \label{2.17}
I_\alpha \As (\rn) = A^{s-\alpha}_{p,q} (\rn), \quad
\| (w_\alpha \wh{f})^\vee | A^{s-\alpha}_{p,q} (\rn) \| \sim \|f \, | \As (\rn) \|,
\end{\eq}
is a lift in all spaces $\As (\rn)$ with $A \in \{B,F \}$, $0<p \le \infty$, $0<q \le \infty$, $s\in \real$ and $\alpha \in \real$.
A proof (in this generality, incorporating $F^s_{\infty,q} (\rn)$) may be found in \cite[Section 1.3.2, pp.\,16--17]{T20}. But
otherwise it is a very classical assertion. Here we combine these observations with the mapping properties of the operators $\tr_\mu$
and $\id_\mu$ according to Theorem \ref{T2.3} in the framework of the spaces $B^s_p (\rn)$ with $1<p<\infty$ and $s\in \real$. First
we collect what is already known in the case of the Hilbert spaces $H^s (\rn) =B^s_2 (\rn)$ as introduced in \eqref{1.14}. Let $\Gamma$
be again a compact $d$--set in \rn, $0<d<n$ and let $n-d <2s \le n$. Then one has by Theorem \ref{T2.3} that
\begin{\eq}   \label{2.18}
\id^\mu = \id_\mu \circ \tr_\mu: \quad H^s (\rn) \hra H^{-s} (\rn),
\end{\eq}
is a compact operator. Combined with \eqref{2.17}, specified to $\As (\rn) = H^s (\rn)$, it follows that
\begin{\eq}   \label{2.19}
D^\mu_s = I_{-2s} \circ \id^\mu = (\id - \Delta)^{-s} \circ \id^\mu: \quad H^s (\rn) \hra H^s (\rn)
\end{\eq}
is a compact operator. We dealt in \cite[Theorem 7.68, p.\,343]{T06} with these {\em fractal PDE}s in the context of these Hilbert spaces
$H^s (\rn) = B^s_2 (\rn)$. Specifying the underlying isotropic Radon measures to the above Hausdorff  measures one obtains the 
following assertion.

\begin{theorem}   \label{T2.4}
Let $\Gamma$ be a compact $d$--set in \rn, $0<d<n$, and let
\begin{\eq}    \label{2.20}
n-d <2s \le n.
\end{\eq}
Then $D^\mu_s$  according to \eqref{2.19} is a compact, non--negative self--adjoint  operator in $H^s (\rn)$. Let $\lambda_k$ be the positive eigenvalues of $D^\mu_s$,
repeated according to multiplicity and ordered by decreasing magnitude,
\begin{\eq}   \label{2.21}
\lambda_1 \ge \lambda_2 \ge \cdots >0, \qquad \lambda_k \to 0 \quad \text{if} \quad k \to \infty.
\end{\eq}
Then
\begin{\eq}   \label{2.22}
\lambda_k \sim k^{-1 + \frac{1}{d} (n-2s)}, \qquad k\in \nat.
\end{\eq}
\end{theorem}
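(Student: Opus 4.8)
The plan is to recognize $D^\mu_s$ as $T^\ast T$, where $T=\tr_\mu$ is the compact trace operator of Theorem \ref{T2.3} and $T^\ast$ is its Hilbert space adjoint, and then to read off the eigenvalues of $D^\mu_s$ directly from the approximation (= singular) numbers of $\tr_\mu$. The decisive point is that the composition $I_{-2s}\circ\id_\mu$ appearing in \eqref{2.19} is precisely this adjoint, the lift $I_{-2s}=(\id-\Delta)^{-s}$ being exactly the correction that converts the dual operator $\id_\mu=\tr_\mu'$ of \eqref{2.13} into the Hilbert adjoint.

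First I would fix on $H^s(\rn)$ the inner product
\[
\langle f,g\rangle_{H^s(\rn)}=\int_{\rn}(1+|\xi|^2)^s\,\wh f(\xi)\,\overline{\wh g(\xi)}\,\di\xi,
\]
chosen so that the Riesz isomorphism $H^s(\rn)\to H^{-s}(\rn)$ is the lift $I_{2s}$, that is $\langle f,h\rangle_{H^s(\rn)}=\langle f,I_{2s}h\rangle$ in the (sesquilinear) $H^s$--$H^{-s}$ pairing underlying \eqref{2.13}, \eqref{2.14}. Using $I_{2s}\circ I_{-2s}=\id$ and the duality $\id_\mu=\tr_\mu'$ from \eqref{2.13}, one then computes, for $f,g\in H^s(\rn)$,
\[
\langle D^\mu_s f,g\rangle_{H^s(\rn)}=\langle \id^\mu f,g\rangle_{H^{-s}\times H^s}=\langle \tr_\mu f,\tr_\mu g\rangle_{L_2(\Gamma,\mu)},
\]
so that $D^\mu_s=T^\ast T$ with $T=\tr_\mu\colon H^s(\rn)\hra L_2(\Gamma,\mu)$ (the case $p=2$, for which \eqref{2.20} is exactly \eqref{2.10}). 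The right-hand side is Hermitian and $\ge0$, whence $D^\mu_s$ is self-adjoint and non-negative; and since $\tr_\mu$ is compact by Theorem \ref{T2.3}(i), the products $\id^\mu=\id_\mu\circ\tr_\mu$ and $D^\mu_s=I_{-2s}\circ\id^\mu$ are compact as well, giving \eqref{2.21}.

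With $D^\mu_s=T^\ast T$ the spectrum is immediate. In a Hilbert space the approximation numbers of a compact operator coincide with its singular values, and the singular values of $T$ are the square roots of the eigenvalues of $T^\ast T$, ordered by decreasing magnitude; hence the positive eigenvalues of $D^\mu_s$, repeated according to multiplicity, satisfy $\lambda_k=a_k(\tr_\mu)^2$ exactly. Inserting \eqref{2.12} with $p=2$,
\[
a_k(\tr_\mu)\sim k^{-\frac12+\frac1d\left(\frac n2-s\right)},
\]
and squaring yields $\lambda_k\sim k^{-1+\frac1d(n-2s)}$, which is \eqref{2.22}; since the $a_k(\tr_\mu)$ are strictly positive for every $k$, there are infinitely many positive eigenvalues tending to $0$, consistent with \eqref{2.21}.

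I expect the only genuinely delicate point to be the identity $T^\ast=I_{-2s}\circ\id_\mu$. One must keep three pairings aligned — the inner product of $L_2(\Gamma,\mu)$, the inner product of $H^s(\rn)$, and the $H^s$--$H^{-s}$ duality of \eqref{2.13}, \eqref{2.14} — fix the linear/antilinear conventions consistently, and verify on the Fourier side that passing from the dual $\tr_\mu'$ to the Hilbert adjoint $\tr_\mu^\ast$ contributes exactly the factor $(1+|\xi|^2)^{-s}$, i.e.\ the lift $I_{-2s}$. Once this bookkeeping is settled, the remaining steps are the standard spectral theory of $T^\ast T$ and a direct substitution of the known asymptotics, with no further obstacle.
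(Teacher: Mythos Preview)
Your proposal is correct and follows essentially the same route the paper indicates: the paper does not give a self--contained proof but refers to \cite[Theorem 7.68]{T06} and, in Remark~\ref{R2.5}, points to the Hilbert--space identity \eqref{2.23} together with the approximation--number asymptotics of Theorem~\ref{T2.3} as the key ingredients. Your identification $D^\mu_s = \tr_\mu^{\,*}\tr_\mu$ (via $\tr_\mu^{\,*}=I_{-2s}\circ\id_\mu$) makes the self--adjointness and non--negativity transparent and reduces \eqref{2.22} to $\lambda_k=a_k(\tr_\mu)^2$ and \eqref{2.12}; this is precisely the Hilbert--space argument the paper has in mind, only spelled out in more detail.
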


\begin{remark}   \label{R2.5}
This is a special case of \cite[Theorem 7.68, p.\,343]{T06} with $H(t) = t^{1/d}$ in the notation used there, \cite[p.\,313]{T06}.
The proof is based on Hilbert space arguments, including
\begin{\eq}   \label{2.23}
|\lambda_k (T)| = a_k(T), \qquad k \in \nat,
\end{\eq} 
for the real eigenvalues $\lambda_k (T)$ and the approximation numbers $a_k (T)$ for compact self--adjoint operators in a Hilbert
space, \cite[Theorem 6.21, p.\,195]{HT08} or \cite[p.\,58]{T06} and the references given there (it is a very classical cornerstone
of the spectral theory in Hilbert spaces).
\end{remark}

\section[Fractal pseudodiff. op.]{Fractal pseudodifferential operators}  \label{S3}
\subsection[Preparations]{Preparations}    \label{S3.1}
It is the main aim of this paper to extend the spectral theory for the fractal differential operators $D^\mu_s$ according to 
\eqref{2.19} in the Hilbert spaces $H^s (\rn)$ to fractal pseudodifferential operators in some Banach spaces $B^s_p (\rn)$ as introduced in
\eqref{1.13}. In addition to Theorem \ref{T1.3} we rely essentially on two new ingredients. First we need a refinement of Theorem
\ref{T2.3}. Secondly we ask for a substitute  of \eqref{2.23}.

We complement  \eqref{2.12} by a corresponding assertion for the identification operator $\id_\mu$ using the duality \eqref{2.13}. For
the approximation numbers $a_k (T)$ and $a_k (T')$ one can rely on \eqref{2.3}. But there is no counterpart for corresponding entropy
numbers. According to  \cite[p.\,332]{Pie07} it is one of the most interesting challenges of operator
theory to compare the asymptotic behaviour of $e_k (T)$ and $e_k (T')$, where again $T:A \hra B$ is linear and compact mapping from
the Banach space $A$ into the Banach space $B$ and
$T': B' \hra A'$ is its dual. In particular one
may ask whether there are numbers $l \in \nat$ and $c \ge 1$ such that
\begin{\eq}   \label{3.1}
e_{lk} (T') \le c \, e_k (T), \qquad k\in \nat.
\end{\eq}
This problem is still unsolved in general (as far as we know). If both spaces $A$ and $B$ are Hilbert spaces then one has $e_k (T') =
e_k (T)$. A short proof and references may be found in \cite[Theorem 1.3.1, pp.\,9--10]{ET96}. Further affirmative assertions have
been obtained in \cite{AMS04} and \cite{AMST04}. In particular, according to \cite{AMST04} the inequality \eqref{3.1} has been
confirmed if at least one of the two Banach spaces $A$ and $B$ is isomorphic to $\ell_p$, $1<p<\infty$. We refer again to 
\cite[pp.\,332--333]{Pie07} where the somewhat different formulations in \cite{AMS04} and \cite{AMST04} have been reformulated in terms of
entropy numbers as in \eqref{3.1}. According to \cite[Corollary 3.8, p.~157]{T06}
the spaces $B^s_p (\rn) = B^s_{p,p} (\rn)$, $s\in \real$, $0<p \le \infty$ are isomorphic to $\ell_p$. This is an immediate consequence
of related wavelet isomorphisms. Now we can complement Theorem \ref{T2.3} as follows. Let $\tr_\mu$ and $\id_\mu$ be as there based on
\eqref{2.7} and \eqref{2.9}.

\begin{corollary}   \label{C3.1}
Let $\Gamma$ be a compact $d$--set in \rn, $0<d<n$, and let $\mu$ be the corresponding Hausdorff measure.
Let $1<p<\infty$, $\frac{1}{p} + \frac{1}{p'} =1$, 
\begin{\eq}   \label{3.2}
n-d <sp \le n \quad \text{and} \quad n-d < s' p' \le n.
\end{\eq}
Then both
\begin{\eq}   \label{3.3}
\tr_\mu: \quad B^s_p (\rn) \hra L_p (\Gamma, \mu)
\end{\eq}
and
\begin{\eq}   \label{3.4}
\id_\mu: \quad L_p (\Gamma, \mu) \hra B^{-s'}_p (\rn)
\end{\eq}
are compact. Furthermore
\begin{\eq}   \label{3.5}
e_k (\tr_\mu) \sim  a_k (\tr_\mu) \sim k^{- \frac{1}{p} + \frac{1}{d} (\frac{n}{p} - s )}, \qquad k\in \nat,
\end{\eq} 
and
\begin{\eq}   \label{3.6}
e_k (\id_\mu) \sim a_k (\id_\mu) \sim  k^{- \frac{1}{p'} + \frac{1}{d} (\frac{n}{p'} - s')}, \qquad k \in \nat.
\end{\eq}
\end{corollary}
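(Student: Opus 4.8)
The plan is to read off \eqref{3.3} and \eqref{3.5} directly from Theorem \ref{T2.3} and to obtain \eqref{3.4} and \eqref{3.6} by duality, applying Theorem \ref{T2.3} a second time with the roles of $(s,p)$ and $(s',p')$ interchanged. Indeed, the first condition in \eqref{3.2}, $n-d < sp \le n$, is nothing but $\frac{n-d}{p} < s \le \frac{n}{p}$, so part (i) of Theorem \ref{T2.3} applies verbatim and yields both the compactness of $\tr_\mu$ in \eqref{3.3} and the two--sided estimate \eqref{3.5}. Nothing further is needed for the first half of the Corollary.

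For the second half I would first apply Theorem \ref{T2.3} with $s,p$ replaced by $s',p'$. The second condition in \eqref{3.2}, $n-d < s'p' \le n$, is precisely $\frac{n-d}{p'} < s' \le \frac{n}{p'}$, so the theorem gives that
\[
\tr_\mu: \quad B^{s'}_{p'}(\rn) \hra L_{p'}(\Gamma,\mu)
\]
is compact with $e_k(\tr_\mu) \sim a_k(\tr_\mu) \sim k^{\beta}$, where $\beta = -\frac{1}{p'}+\frac{1}{d}\big(\frac{n}{p'}-s'\big)$. Passing to the dual operator and using $(p')'=p$ together with $L_{p'}(\Gamma,\mu)' = L_p(\Gamma,\mu)$ and $B^{s'}_{p'}(\rn)' = B^{-s'}_p(\rn)$ from \eqref{2.14} identifies $(\tr_\mu)'$ with the map $\id_\mu: L_p(\Gamma,\mu) \hra B^{-s'}_p(\rn)$ of \eqref{3.4}. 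Since the dual of a compact operator is compact (Remark \ref{R2.2}), $\id_\mu$ is compact; and since approximation numbers are invariant under dualisation, $a_k(\id_\mu) = a_k(\tr_\mu) \sim k^\beta$ by \eqref{2.3}, which is exactly the approximation--number part of \eqref{3.6}.

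The only genuinely delicate point is the entropy--number part of \eqref{3.6}, since entropy numbers have no general duality. Here I would invoke \eqref{3.1}, the affirmative duality of \cite{AMST04}, which is available because the Besov spaces in play are isomorphic to a sequence space: $B^{s'}_{p'}(\rn) \cong \ell_{p'}$ and $B^{-s'}_p(\rn) \cong \ell_p$. Writing $T = \tr_\mu: B^{s'}_{p'}(\rn) \hra L_{p'}(\Gamma,\mu)$, so that $T' = \id_\mu$, inequality \eqref{3.1} applied to $T$ gives $e_{lk}(\id_\mu) \le c\, e_k(\tr_\mu) \sim c\, k^\beta$; since the entropy numbers are non--increasing and $l$ is fixed, this yields the upper bound $e_k(\id_\mu) \le c'\, k^\beta$. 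For the matching lower bound I would apply \eqref{3.1} once more, now to the operator $\id_\mu$ itself, which is legitimate because $B^{-s'}_p(\rn) \cong \ell_p$. By reflexivity of all spaces involved one has $(\id_\mu)' = (\tr_\mu)'' = \tr_\mu$, so \eqref{3.1} gives $e_{lk}(\tr_\mu) \le c\, e_k(\id_\mu)$ and therefore $e_k(\id_\mu) \ge c^{-1} e_{lk}(\tr_\mu) \sim k^\beta$. Combining the two bounds gives $e_k(\id_\mu) \sim k^\beta$, completing \eqref{3.6}.

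The main obstacle is exactly this entropy--number duality; the rest is bookkeeping resting on the $(s,p) \leftrightarrow (s',p')$ symmetry of the hypotheses \eqref{3.2} and on the exact duality of approximation numbers. The points I would watch are the harmless reindexing by the fixed factor $l$ coming from \eqref{3.1} (which does not change the polynomial rate) and the use of reflexivity of $B^s_p(\rn)$ and $L_p(\Gamma,\mu)$ to justify $(\tr_\mu)'' = \tr_\mu$. One could alternatively obtain the upper bound from Carl's inequality in the form $e_k(\id_\mu) \le c\, a_k(\id_\mu)$, independently of \cite{AMST04}, but the lower bound appears to genuinely require the $\ell_p$--structure of the Besov spaces.
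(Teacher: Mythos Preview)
Your proposal is correct and follows essentially the same route as the paper's proof: both read off \eqref{3.3}, \eqref{3.5} directly from Theorem~\ref{T2.3}, then apply Theorem~\ref{T2.3} with $(s',p')$ in place of $(s,p)$, identify $\id_\mu$ as the dual of this second trace via \eqref{2.13}--\eqref{2.14}, handle approximation numbers by \eqref{2.3}, and handle entropy numbers by applying the AMST duality \eqref{3.1} twice (once in each direction) using the $\ell_p$--isomorphism of $B^s_p(\rn)$ and reflexivity. Your side remark about deriving the upper entropy bound from a Carl--type inequality linking $e_k$ to $a_k$ is a legitimate alternative for that half, though not the one the paper uses.
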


\begin{proof} 
We deal with the entropy numbers $e_k$. The proof for the approximation numbers $a_k$ is the same, based on the duality \eqref{2.3}
instead of \eqref{3.1}. 
In addition to \eqref{2.12} = \eqref{3.5} it follows from Theorem \ref{T2.3} that
\begin{\eq}   \label{3.7}
e_k \big( \tr_\mu: \ B^{s'}_{p'} (\rn) \hra L_{p'} (\Gamma, \mu) \big) \sim k^{-\frac{1}{p'} + \frac{1}{d} (\frac{n}{p'} - s')},
\qquad k \in \nat.
\end{\eq}
The above discussion shows that we can rely on the duality assertion \eqref{3.1}.
Then one has by \eqref{2.13} (where $p$ and $p'$ change their roles) that
\begin{\eq}  \label{3.8}
e_k (\id_\mu) \le c \, k^{- \frac{1}{p'} + \frac{1}{d} (\frac{n}{p'} - s')}, \qquad k \in \nat,
\end{\eq}
for $\id_\mu$ as in \eqref{3.4}. All spaces in \eqref{3.4} and \eqref{3.7} are reflexive Banach spaces. Then \eqref{2.13} can
be complemented by
\begin{\eq}   \label{3.9}
\tr_\mu = \id_\mu': \quad B^{s'}_{p'} (\rn) \hra L_{p'} (\Gamma, \mu).
\end{\eq}
Now it follows from the duality assertion \eqref{3.1} and the equivalence \eqref{3.7} that \eqref{3.8} must be an 
equivalence. This proves \eqref{3.6} for the entropy numbers.
\end{proof}

Next we describe a substitute for \eqref{2.23}. Let $B$ be a complex infinitely--dimensional quasi--Banach space and let $K: \ B\hra B$ be a linear compact operator. Then its spectrum
in the complex plane $\comp$
consists of the origin $0$ and an at most countably infinite number of non--zero eigenvalues of finite algebraic
multiplicity which may
accumulate only at the origin. Recall that the algebraic multiplicity of an eigenvalue $\lambda \not= 0$ of $K$ is the dimension of
\begin{\eq}   \label{3.10}
\big\{ b\in B: \ (K- \lambda \, \id )^k b=0 \ \text{for some $k\in \nat$} \big\}.
\end{\eq}
This well--known assertion is also covered by \cite[Theorem, p.\,5]{ET96}. Let $\{ \lambda_k (K) \}$ be the sequence of all non--zero
eigenvalues of $K$, repeated according to their algebraic multiplicity and ordered so that
\begin{\eq}   \label{3.11} 
|\lambda_1 (K)| \ge |\lambda_2 (K)| \ge \ldots \ge 0.
\end{\eq}
If $K$ has only $m <\infty$ non--zero
distinct eigenvalues and if $M$ is the sum of their algebraic multiplicities then we put $\lambda_j (K) =0$ for
$j>M$. Let $e_k (K)$ be the entropy numbers of $K$ introduced in Definition \ref{D2.1}(ii).  Then
\begin{\eq}   \label{3.12}
| \lambda_k (K)| \le \sqrt{2} \, e_k (K), \qquad k\in \nat,
\end{\eq}
is Carl's inequality, going back to \cite{Carl81} and \cite{CaT80}. Details and further explanations may be found in \cite[Section 
1.3.4, pp.\,18--22]{ET96} and \cite[Theorem 6.25, p.\,197]{HT08}. At the end of this paper we add in Section \ref{S4} a few related
personal comments.

\subsection[Main theorem]{Main theorem}   \label{S3.2}
First we formalize the fractal pseudodifferential operator $T^\mu_\tau$ already mentioned in the Abstract. Afterwards we combine
Corollary \ref{C3.1} and suitable counterparts of \eqref{2.18}, \eqref{2.19}.

Let $F$ be the Fourier transform according to \eqref{1.6}. Let $L_p (\Gamma,\mu)$, $1<p<\infty$, be as in \eqref{2.6}, \eqref{2.7},
interpreting $f\mu$ for $f\in L_p (\Gamma, \mu)$ as a complex measure in \rn. Then one has for $\vp \in S(\rn)$ as usual
\begin{\eq}   \label{3.13}
\begin{aligned}
\big( F(f\mu), \vp\big) &= (f\mu, F\vp) \\
&= (2\pi)^{-\frac{n}{2}} \int_\Gamma f(\gamma) \int_{\rn} e^{-i \gamma x} \vp (x) \, \di x \, \mu (\di \gamma) \\
&= (2\pi)^{-\frac{n}{2}} \int_{\rn} \int_\Gamma  e^{-i \gamma x} f(\gamma)\, \mu (\di \gamma) \vp (x) \, \di x 
\end{aligned}
\end{\eq}
resulting in the analytic function
\begin{\eq}   \label{3.14}
F \big(f\mu\big)(x) = (2\pi)^{-\frac{n}{2}}  \int_\Gamma  e^{-i \gamma x} f(\gamma) \, \mu (\di \gamma), \qquad f\in L_p (\Gamma, \mu).
\end{\eq}
Let  $T_\tau$,
\begin{\eq}   \label{3.15}
\big( T_\tau f \big)(x) = \int_{\rn} e^{-i x \xi} \tau (x,\xi) f^\vee(\xi) \, \di \xi, \qquad x\in \rn,
\end{\eq}
be again the pseudodifferential operator of the H\"{o}rmander class $\Psi^\sigma_{1,\delta} (\rn)$, $\sigma \in \real$, $0 \le \delta \le 1$,
as already described in \eqref{1.15} (changing the role of $F$ and $F^{-1}$, but this is immaterial) where the $C^\infty$
symbol $\tau (x,\xi)$ satisfies for some constants
$c_{\alpha,\gamma} \ge 0$,
\begin{\eq}    \label{3.16}
\big| D^\alpha_x D^\gamma_{\xi} \tau (x,\xi) \big| \le c_{\alpha, \gamma} \big( 1 + |\xi| \big)^{\sigma - |\gamma| + \delta |\alpha|},
\quad x\in \rn, \quad \xi \in \rn.
\end{\eq}
We rely on the mapping property 
\begin{\eq}   \label{3.17}
T_\tau: \quad B^s_p (\rn) \hra B^s_p (\rn), \qquad s>0, \quad 1<p<\infty,
\end{\eq}
for the pseudodifferential operators $T_\tau \in \Psi^0_{1,\delta} (\rn)$, $0 \le \delta \le 1$, covered by Theorem \ref{T1.3}. 
The counterpart of \eqref{2.18} now based on Corollary \ref{C3.1} is given by
\begin{\eq}   \label{3.18}
\id^\mu = \id_\mu \circ \tr_\mu: \quad B^s_p (\rn) \hra B^{-s'}_p (\rn)
\end{\eq}
for suitably chosen $s$ and $s'$
This will be combined with $T_\tau$ in \eqref{3.15} such that the corresponding {\em fractal pseudodifferential
operator} $T^\mu_\tau,$
\begin{\eq}   \label{3.19}
\big( T^\mu_\tau f\big)(x) = \int_{\rn} e^{-ix\xi} \, \tau(x,\xi) \, \big( \id^\mu f \big)^\vee (\xi) \, \di \xi, \qquad x\in \rn,
\end{\eq}
is compact in $B^s_p (\rn)$. The counting of (possible) non--zero eigenvalues $\lambda_k (T^\mu_\tau)$ is the same as in \eqref{3.10},
\eqref{3.11}. We rely again on \eqref{3.12}.

\begin{theorem*}
Let $\Gamma$ be a compact $d$--set in \rn, $0<d<n$. Let $1<p<\infty$ and $n-d <sp \le n$. Then $T^\mu_\tau$ according to \eqref{3.19}
with $\sigma = -sp$ in \eqref{3.16} is a linear and compact operator in $B^s_p (\rn)$ and
\begin{\eq}   \label{3.20}
|\lambda_k (T^\mu_\tau)| \le c \, k^{-1 + \frac{1}{d} (n-sp)}
\end{\eq}
for some $c>0$ and all $k\in \nat$.
\end{theorem*}

\begin{proof}
We apply Corollary \ref{C3.1} with
\begin{\eq}   \label{3.21}
s' = \frac{sp}{p'} = s(p-1), \qquad \frac{n}{p'} - s' = \frac{1}{p'} (n-sp),
\end{\eq}
to \eqref{3.18}. Then
\begin{\eq}   \label{3.22}
\id^\mu: \quad B^s_p (\rn) \hra B^{-s(p-1)}_p (\rn)
\end{\eq}
is compact and
\begin{\eq}  \label{3.23}
e_k (\id^\mu ) \le c \, k^{-1 + \frac{1}{d} (n-sp)}, \qquad k \in \nat,
\end{\eq}
where we used \eqref{3.5}, \eqref{3.6} and the composition property \eqref{2.5a}. By \eqref{3.15} and \eqref{3.16} one has for $\sigma=
-sp$,
\begin{\eq}   \label{3.24}
T_{\tau_\sigma} \in \Psi^0_{1,\delta} (\rn), \quad \tau_\sigma (x,\xi)= \tau (x,\xi) \big( 1+ |\xi|^2 )^{\frac{sp}{2}}.
\end{\eq}
We apply the lift \eqref{2.16}, \eqref{2.17} to the right--hand side of \eqref{3.22},
\begin{\eq}    \label{3.25}
I_\sigma = I_{-sp} : \quad B^{-sp +s}_p (\rn) \hra B^s_p (\rn).
\end{\eq}
Then $T^\mu_\tau$ can be decomposed as
\begin{\eq}   \label{3.26}
T^\mu_\tau = T_{\tau_\sigma} \circ I_\sigma \circ \id^\mu: \quad B^s_p (\rn) \hra B^s_p (\rn)
\end{\eq}
based on \eqref{3.22}, \eqref{3.25} and Theorem \ref{T1.3}. Then \eqref{3.20} follows from \eqref{3.23}, the composition property
\eqref{2.5} and \eqref{3.12}.
\end{proof}

\begin{remark}   \label{R3.2}
The above Main Theorem extends for $p=2$ Theorem \ref{T2.4} to related fractal pseudodifferential operators. The non--negative 
selfadjoint operator $D^\mu_s$ in \eqref{2.19} coincides with \eqref{3.26} if $\tau_\sigma (x,\xi) =1$ and $p=2$ (then $\sigma = -2s$).
However in general one cannot expect equivalences as in \eqref{2.22}. But it shows that the exponent  $-1 + \frac{1}{d} (n-sp)$ in 
\eqref{3.20} is the best possible one. Spectral problems as considered in Theorem \ref{T2.4} and the Main Theorem attracted always some
attention. One may consult the recent papers \cite{RoS21, RoT22} and the references within. At the end of \cite{RoT22} the authors 
compare their results with related properties in \cite{ET96, T97} which in turn may be considered as forerunners of Theorem \ref{T2.4}
and Remark \ref{R2.5} as special cases of corresponding assertions in  \cite{T06} and the above Main Theorem.
\end{remark}

\section[Reminiscences]{Some personal reminiscences}   \label{S4}
The approximation numbers $a_k (T)$ according to Definition \ref{D2.1}(i) go back to the paper \cite{Pie63} of Pietsch published in a
lesser known journal. Nevertheless they had a great impact on the mathematical community round the world  dealing with (linear) 
operators in Banach spaces and quasi--Banach spaces including the problem  how to measure  their properties, especially compactness.
This was the major topic of Pietsch during his time at the Mathematical Department of the university in Jena. He had been appointed
there in 1965 and retired in 2000, but remained an active researcher in the years after. He studied in this time numerous already
known and newly created quantities, called $s$--{\em numbers}. Of special interest was the question to which so--called {\em operator 
ideals} the counterparts $s_k (T)$ of the above $a_k (T)$ belong. His respective research culminated in the two well--received books
\cite{Pie80} and \cite{Pie87}. The history of the entropy numbers is somewhat more involved. Instead of $e_k (T)$ in Definition 
\ref{D2.1}(ii) and inspired by a related suggestion in \cite{MP68} we introduced in \cite{Tri70} the entropy numbers $\ve_m (T)$,
$m\in \nat$, as the infimum of $\ve >0$ such that
\begin{\eq}   \label{4.1}
T (U_A) \subset \bigcup^{m}_{j=1} (b_j + \ve U_B ) \quad \text{for some} \quad b_1, \ldots b_m \in B,
\end{\eq}
(using the same notation as above). The motivation in \cite{Tri70} came from the proposal to introduce so--called {\em entropy ideals}
$E_{p,q}$ with $0<p<\infty$, $0<q \le \infty$,
\begin{\eq}  \label{4.2}
\begin{aligned}
E_{p,q} (A,B) &= \big\{ T:A\hra B, \Big(\sum^\infty_{m=2} \ve^q_m (T) (\log m)^{\frac{q}{p}-1} m^{-1} \Big)^{1/q} <\infty \big\}, \
q<\infty, \\
E_{p,\infty} (A,B) &= \big\{ T:A\hra B, \ \sup_{m \ge 2} \ve_m (T) (\log m)^{\frac{1}{p}} <\infty \big\}, \ q=\infty.
\end{aligned}
\end{\eq}
In a discussion with me Pietsch expressed some doubts. Even for $1\le p,q \le \infty$ these entropy ideals are only quasi--Banach
spaces (not necessarily Banach spaces). There is no duality, a cornerstone of Pietsch's theory of operator ideals. Later he gave up
his reservation. Secondly he suggested  to work  with the dyadic entropy numbers $\ve_{2^{k-1}} (T)$, $k \in \nat$, which coincide with
the entropy numbers $e_k (T)$ as introduced in Definition \ref{D2.1}(ii). Then \eqref{4.2} simplifies to
\begin{\eq}  \label{4.3}
\begin{aligned}
E_{p,q} (A,B) &= \big\{ T:A\hra B, \Big(\sum^\infty_{k=1} e^q_k (T) \,k^{\frac{q}{p}-1} \Big)^{1/q} <\infty \big\}, \
q<\infty, \\
E_{p,\infty} (A,B) &= \big\{ T:A\hra B, \ \sup_{k \ge 1} e_k (T) \, k^{\frac{1}{p}} <\infty \big\}, \ q=\infty.
\end{aligned}
\end{\eq}
Neither $\ve_m (T)$ nor $e_k (T)$ are $s$--numbers. But the version $e_k (T)$ brings them nearer to the axiomatically requested
properties of $s$--numbers and called later on {\em pseudo--$s$--numbers}. Further related discussions may be found in 
\cite[Section 12]{Pie87} and \cite[p.\,348]{Pie07}. His suggestion to give preference to the entropy numbers $e_k (T)$, compared with
$\ve_m (T)$, had been strikingly  underlined by the observation that these numbers can be used in the spectral theory of compact
operators. Let $K: B \hra B$ be a linear compact operator in the (complex) Banach space $B$ and let $\{\lambda_k (K) \}$ be the 
non--zero eigenvalues  counted with respect to their algebraic multiplicity as already described in  \eqref{3.10}, \eqref{3.11}. In
1979 B. Carl discovered that
\begin{\eq}     \label{4.4}
|\lambda_k (T)| \le \inf_{m \in \nat} 2^{\frac{m-1}{2k}} e_m (T), \qquad k \in \nat,
\end{\eq}
with \eqref{3.12} as a consequence. This has been published in \cite{Carl81}. Bernd Carl  is a pupil of Pietsch and was at this time 
a member of his research group. At the late 1980s he emigrated from East Germany (German Democratic Republic) to West Germany 
(Federal Republic of Germany). After the reunification of Germany he came back and was appointed 1992 in Jena where he stayed up to
his retirement in 2010. Discussing the above remarkable formula with me it came out more or less instantaneously (within hours) that
this assertion can be improved by
\begin{\eq}   \label{4.5}
\Big( \prod^k_{j=1} |\lambda_j (K) | \Big)^{1/k} \le \inf_{m\in \nat} 2^{\frac{m-1}{2k}} e_m (T), \qquad k\in \nat,
\end{\eq}
using geometric arguments, published in \cite{CaT80}. The proof in this short note became standard (with some modifications) and has been 
taken over in half a dozen books. The most elaborated version of this proof and further discussions of Carl's findings can be found
in \cite[Sections 6.3 -- 6.7]{HT08}. One may also consult \cite{CaS90} and \cite{ET96}.

The reputation of Pietsch in his early years originate from his approach to Grothendieck's theory of nuclear spaces and nuclear 
operators in \cite{Gro54, Gro55} and, especially, in \cite{Gro56}, being considered as ingenious but almost unreadable. Pietsch's
version in \cite{Pie65} (in German, but translated first in Russian and then in English) simplified this topic  significantly and
made it accessible to the common mathematician working in this field. In addition to his research he became in his later years more
and more interested in the history of Functional Analysis inclusively the persons behind their theorems. The related studies 
intensified after his retirement in 2000 and culminated finally in the encyclopaedic treatise \cite{Pie07} of 850 pages (including 
a Bibliography covering 50 pages). One would expect that he had helpers  collecting papers, producing drafts and other useful
preparations  for such an enormous  undertaking. But this is not the case. It is almost unbelievable, but it is the work of a single person: Albrecht Pietsch. What an achievement.

\end{document}